\newtheorem*{maintheorem*}{Main Theorem}
\newtheorem{theorem}{Theorem}[section]
\newtheorem{prop}[theorem]{Proposition}
\newtheorem{lemma}[theorem]{Lemma}
\newtheorem{cor}[theorem]{Corollary}
\theoremstyle{definition}
\newtheorem{definition}[theorem]{Definition}
\newtheorem{remark}[theorem]{Remark}
\newtheorem{example}[theorem]{Example}
\numberwithin{equation}{section}
\newcommand{\ff}{\mathbb{F}}
\newcommand{\nn}{\mathbb{N}}
\newcommand{\qq}{\mathbb{Q}}
\newcommand{\rr}{\mathbb{R}}
\newcommand{\zz}{\mathbb{Z}}
\newcommand{\cont}{\mathsf{c}}
\providecommand\ldb{\llbracket}
\providecommand\rdb{\rrbracket}
\keywords{monoid rings, monoid algebras, Gauss's Lemma, Eisenstein's Criterion, Puiseux algebras, atomic domains, half-factoriality, other-half-factorial monoids, Puiseux monoids, numerical semigroups}
\begin{document}
	
	\mbox{}
	\title{Irreducibility and factorizations \\  in monoid rings}
	
	\author{Felix Gotti}
	\address{Department of Mathematics\\UC Berkeley\\Berkeley, CA 94720 \newline \indent Department of Mathematics\\Harvard University\\Cambridge, MA 02138}
	\email{felixgotti@berkeley.edu}
	\email{felixgotti@harvard.edu}
	
	\subjclass[2010]{Primary: 20M25, 13F15; Secondary: 13A05}
	\date{\today}
	
	\begin{abstract}
		For an integral domain $R$ and a commutative cancellative monoid $M$, the ring consisting of all polynomial expressions with coefficients in $R$ and exponents in $M$ is called the monoid ring of $M$ over $R$. An integral domain is called atomic if every nonzero nonunit element can be written as a product of irreducibles. In the investigation of the atomicity of integral domains, the building blocks are the irreducible elements. Thus, tools to prove irreducibility are crucial to study atomicity. In the first part of this paper, we extend Gauss's Lemma and Eisenstein's Criterion from polynomial rings to monoid rings. An integral domain $R$ is called half-factorial (or an HFD) if any two factorizations of a nonzero nonunit element of $R$ have the same number of irreducible elements (counting repetitions). In the second part of this paper, we determine which monoid algebras with nonnegative rational exponents are Dedekind domains, Euclidean domains, PIDs, UFDs, and HFDs. As a side result, we characterize the submonoids of $(\qq_{\ge 0},+)$ satisfying a dual notion of half-factoriality known as other-half-factoriality.
	\end{abstract}
	\bigskip
	
	\maketitle
	
\section{Introduction}
\label{sec:intro}

Given an integral domain $R$ and a commutative cancellative monoid $M$, the ring of all polynomial expressions with coefficients in $R$ and exponents in $M$ is known as the monoid ring of $M$ over $R$ (cf. group rings). Although the study of group rings dates back to the first half of the twentieth century, it was not until the 1970s that the study of monoid rings gained significant attention. A systematic treatment of ring-theoretical properties of monoid rings was initiated by R.~Gilmer and T.~Parker~\cite{rG74,GP74,GP75} in 1974. 
Since then monoid rings have received a substantial amount of consideration and have permeated through many fields under active research, including algebraic combinatorics~\cite{BCMP98}, discrete geometry~\cite{BG02}, and functional analysis~\cite{mA04}. During the last decades, monoid rings have also been studied from the point of view of factorization theory; see, for instance,~\cite{AJ15,AS97,hK01}. Gilmer in \cite{rG84} offers a comprehensive exposition on the advances of commutative semigroup ring theory until mid 1980s.

An integral domain is called atomic if every nonzero nonunit element it contains can be written as a product of irreducibles. Irreducible elements (sometimes called atoms) are the building blocks of atomicity and factorization theory. As a result, techniques to argue irreducibility are crucial in the development of factorization theory. Gauss's Lemma and Eisenstein's Criterion are two of the most elementary but effective tools to prove irreducibility in the context of polynomial rings. After reviewing some necessary terminology and background in Section~\ref{sec:background}, we dedicate Section~\ref{sec:Irreducibility Criteria} to extend Gauss's Lemma and Eisenstein's Criterion from the context of polynomial rings to that one of monoid rings.
\smallskip

An atomic monoid $M$ is called half-factorial provided that for all $x \in M$, any two factorizations of $x$ have the same number of irreducibles (counting repetitions). In addition, an integral domain is called half-factorial (or an HFD) if its multiplicative monoid is half-factorial. The concept of half-factoriality was first investigated by L.~Carlitz in the context of algebraic number fields; he proved that an algebraic number field is half-factorial if and only if its class group has size at most two~\cite{lC60}. Other-half-factoriality, on the other hand, is a dual version of half-factoriality, and it was introduced by J.~Coykendall and W.~Smith in~\cite{CS11}. 
\smallskip

Additive monoids of rationals have a wild atomic structure~\cite{fG19,fG17} and a complex arithmetic of factorizations~\cite{fG18a,GO19}. The monoid rings they determine have been explored in~\cite{ACHZ07}. In addition, examples of such monoid rings have also appeared in the past literature, including \cite[Section~1]{aG74}, \cite[Example~2.1]{AAZ90}, and more recently, \cite[Section~5]{CG19}. In the second part of this paper, which is Section~\ref{sec:factorization in monoid algebras}, we study half-factoriality and other-half-factoriality in the context of additive monoids of rationals and the monoid algebras they induce. We also determine which of these monoid algebras are Dedekind domains, Euclidean domains, PIDs, UFDs, and HFDs.
\bigskip

\section{Notation and Background}
\label{sec:background}

\subsection{General Notation} Throughout this paper, we let $\nn_0$ denote the set of all nonnegative integers, and we set $\nn := \nn_0 \setminus \{0\}$. If $a,b \in \zz$ and $a \le b$, then we let $\ldb a,b \rdb$ denote the interval of integers from $a$ to $b$, i.e.,
\[
	\ldb a,b \rdb := \{j \in \zz \mid a \le j \le b\}.
\]
For a subset $X$ of $\rr$, we set $X^\bullet := X \setminus \{0\}$. In addition, if $r \in \rr$, we define
\[
	X_{> r} := \{x \in X \mid x > r\} \quad \text{and} \quad X_{\ge r} := \{x \in X \mid x \ge r\}.
\]
If $q \in \qq_{> 0}$, then we denote the unique $m,n \in \nn$ such that $q = m/n$ and $\gcd(m,n)=1$ by $\mathsf{n}(q)$ and $\mathsf{d}(q)$, respectively.
\medskip

\subsection{Monoids} Within the scope of our exposition, a \emph{monoid} is defined to be a commutative and cancellative semigroup with an identity element. In addition, monoids here are written multiplicatively unless we specify otherwise. Let $M$ be a monoid. We let $U(M)$ denote the set of units (i.e., invertible elements) of $M$. When $U(M)$ consists of only the identity element, $M$ is said to be \emph{reduced}. On the other hand, $M$ is called \emph{torsion-free} if for all $x,y \in M$ and $n \in \nn$, the equality $x^n = y^n$ implies $x = y$. For $S \subseteq M$, we let $\langle S \rangle$ denote the submonoid of $M$ generated by $S$. Further basic definitions and concepts on commutative cancellative monoids can be found in~\cite[Chapter~2]{pG01}.

If $y,z \in M$, then $y$ \emph{divides} $z$ \emph{in} $M$ provided that there exists $x \in M$ such that $z = xy$; in this case we write $y \mid_M z$. Also, the elements $y$ and $z$ are called \emph{associates} if $y \mid_M z$ and $z \mid_M y$; in this case we write $y \simeq z$. An element $p \in M \setminus U(M)$ is said to be \emph{prime} when for all $x,y \in M$ with $p \mid_M xy$, either $p \mid_M x$ or $p \mid_M y$. If every element in $M \setminus U(M)$ can be written as a product of primes, then $M$ is called \emph{factorial}. In a factorial monoid every nonunit element can be uniquely written as a product of primes (up to permutation and associates). In addition, an element $a \in M \setminus U(M)$ is called an \emph{atom} if for any $x,y \in M$ such that $a = xy$ either $x \in U(M)$ or $y \in U(M)$. The set of all atoms of $M$ is denoted by $\mathcal{A}(M)$, and $M$ is said to be \emph{atomic} if every nonunit element of $M$ is a product of atoms. Since every prime element is clearly an atom, every factorial monoid is atomic.
\medskip

\subsection{Factorizations} Let $M$ be a monoid, and take $x \in M \setminus U(M)$. Suppose that for $m \in \nn$ and $a_1, \dots, a_m \in \mathcal{A}(M)$,
\begin{equation} \label{eq:factorization definition}
	x = a_1 \cdots a_m.
\end{equation}
Then the right-hand side of~(\ref{eq:factorization definition}) (treated as a formal product of atoms) is called a \emph{factorization} of $x$, and $m$ is called the \emph{length} of such a factorization. Two factorizations $a_1 \cdots a_m$ and $b_1 \cdots b_n$ of $x$ are considered to be equal provided that $m=n$ and that there exists a permutation $\sigma \in S_m$ such that $b_i \simeq a_{\sigma(i)}$ for every $i \in \ldb 1,m \rdb$. The set of all factorizations of $x$ is denoted by $\mathsf{Z}_M(x)$ or, simply, by $\mathsf{Z}(x)$. We then set
\[
	\mathsf{Z}(M) := \bigcup_{x \in M \setminus U(M)} \mathsf{Z}(x).
\]
For $z \in \mathsf{Z}(x)$, we let $|z|$ denote the length of $z$.
\medskip

\subsection{Monoid Rings} For an integral domain $R$, we let $R^\times$ denote the group of units of~$R$. We say that $R$ is \emph{atomic} if every nonzero nonunit element of $R$ can be written as a product of irreducibles (which are also called atoms).

Let $M$ be a reduced torsion-free monoid that is additively written. For an integral domain $R$, consider the set $R[X;M]$ comprising all maps $f \colon M \to R$ satisfying that
\[
	\{s \in M \mid f(s) \neq 0 \}
\]
is finite. We shall conveniently represent an element $f \in R[X;M]$ by 
\[
	f = \sum_{s \in M} f(s)X^s = \sum_{i=1}^n f(s_i)X^{s_i},
\]
where $s_1, \dots, s_n$ are those elements $s \in M$ satisfying that $f(s) \neq 0$. Addition and multiplication in $R[X;M]$ are defined as for polynomials, and we call the elements of $R[X;M]$ \emph{polynomial expressions}. Under these operations, $R[X;M]$ is a commutative ring, which is called the \emph{monoid ring of} $M$ \emph{over} $R$ or, simply, a \emph{monoid ring}. Following Gilmer~\cite{rG84}, we will write $R[M]$ instead of $R[X;M]$. Since $R$ is an integral domain, $R[M]$ is an integral domain~\cite[Theorem~8.1]{rG84} with set of units $R^\times$~\cite[Corollary~4.2]{GP74}. If~$F$ is a field, then we say that $F[M]$ is a \emph{monoid algebra}. Now suppose that the monoid $M$ is totally ordered. For $k \in \nn$, we say that
\[
	f = \alpha_1X^{q_1} + \dots + \alpha_k X^{q_k} \in R[M] \setminus \{0\}
\]
is written in \emph{canonical form} when the coefficient $\alpha_i$ is nonzero for every $i \in \ldb 1,k \rdb$ and $q_1 > \dots > q_k$. Observe that there is only one way to write $f$ in canonical form. We call $\deg(f) := q_1$ the \emph{degree} of $f$. In addition, $\alpha_1$ is called the \emph{leading coefficient} of $f$, and $\alpha_k$ is called the \emph{constant coefficient} of~$f$ provided that $q_k = 0$. As it is customary for polynomials, $f$ is called a \emph{monomial} when $k = 1$.

Suppose that $\psi \colon M \to M'$ is a monoid homomorphism, where $M$ and $M'$ are reduced torsion-free monoids. Also, let $\psi^* \colon R[M] \to R[M']$ be the ring homomorphism determined by the assignment $X^s \mapsto X^{\psi(s)}$. It follows from \cite[Theorem~7.2(2)]{rG84} that if $\psi$ is injective (resp., surjective), then $\psi^*$ is injective (resp., surjective). Let us recall the following easy observation.

\begin{remark} \label{rem:isomophism of monoid algebras}
	If $R$ is an integral domain and the monoids $M$ and $M'$ are isomorphic, then the monoid rings $R[M]$ and $R[M']$ are also isomorphic.
\end{remark}
\bigskip

\section{Irreducibility Criteria for Monoid Rings}
\label{sec:Irreducibility Criteria}

\subsection{Extended Gauss's Lemma} Our primary goal in this section is to offer extended versions of Gauss's Lemma and Eisenstein's Criterion for monoid rings.

Let $R$ be an integral domain and take $r_1, \dots, r_n \in R \setminus \{0\}$ for some $n \in \nn$. An element $r \in R$ is called a \emph{greatest common divisor} of $r_1, \dots, r_n$ if $r$ divides $r_i$ in $R$ for every $i \in \ldb 1,n \rdb$ and $r$ is divisible by each common divisor of $r_1, \dots, r_n$. Any two greatest common divisors of $r_1, \dots, r_n$ are associates in $R$. We let $\text{GCD}(r_1, \dots, r_n)$ denote the set of all greatest common divisors of $r_1, \dots, r_n$.

\begin{definition}
	An integral domain $R$ is called a \emph{GCD-domain} if any finite subset of $R \setminus \{0\}$ has a greatest common divisor in $R$.
\end{definition}

Let $M$ be a reduced torsion-free monoid, and let $R$ be an integral domain. Suppose that for the polynomial expression
\[
	f = \alpha_1 X^{q_1} + \dots + \alpha_k X^{q_k} \in R[M] \setminus \{0\}
\]
the exponents $q_1, \dots, q_k$ are pairwise distinct. Then GCD$(\alpha_1, \dots, \alpha_k)$ is called the \emph{content} of $f$ and is denoted by $\cont(f)$. If $\cont(f) = R^\times$, then $f$ is called \emph{primitive}. Notice that if $R$ is not a GCD-domain, then $\mathsf{c}(f)$ may be the empty set. It is clear that $\mathsf{c}(rf) = r \mathsf{c}(f)$ for all $r \in R \setminus \{0\}$ and $f \in R[M] \setminus \{0\}$. As for the case of polynomials, the following lemma holds.

\begin{lemma} \label{lem:content identity} 
	Let $M$ be a reduced torsion-free monoid, and let $R$ be a GCD-domain. If $f$ and $g$ are elements of $R[M] \setminus \{0\}$, then $\mathsf{c}(fg) = \mathsf{c}(f) \mathsf{c}(g)$.
\end{lemma}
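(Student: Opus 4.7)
The plan is to reduce the statement to the classical Gauss's lemma for a multivariate polynomial ring $R[x_1,\dots,x_k]$ over a GCD-domain, which I will then cite as a black box.

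I begin with the standard reduction to the primitive case. Using the identity $\mathsf{c}(rh) = r\,\mathsf{c}(h)$ recorded just before the lemma, I factor $f = c\,f_1$ and $g = c'\,g_1$ with $c \in \mathsf{c}(f)$, $c' \in \mathsf{c}(g)$, and $f_1, g_1$ primitive (the coefficients of $f_1$ are obtained by dividing out a chosen greatest common divisor of the coefficients of $f$, and a standard argument shows they have trivial GCD). Then $\mathsf{c}(fg) = cc'\,\mathsf{c}(f_1 g_1)$, so it suffices to prove that $f_1 g_1$ is primitive whenever $f_1$ and $g_1$ are.

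To this end I transport the problem to a polynomial ring. Let $N \subseteq M$ be the submonoid generated by the finite set $\supp(f_1) \cup \supp(g_1)$; then $N$ is a reduced, torsion-free, finitely generated cancellative commutative monoid, and $f_1, g_1, f_1g_1 \in R[N]$. The inclusion $N \hookrightarrow M$ induces an injection $R[N] \hookrightarrow R[M]$ by \cite[Theorem~7.2(2)]{rG84}, and the Grothendieck group $\gp(N)$ is a finitely generated torsion-free abelian group, hence isomorphic to $\zz^k$ for some $k \in \nn_0$. Composing embeddings gives $R[N] \hookrightarrow R[\gp(N)] \cong R[x_1^{\pm 1}, \dots, x_k^{\pm 1}]$; because content depends only on the nonzero coefficients of the canonical form, each of these steps preserves content. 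Multiplying $f_1$ and $g_1$ by suitable Laurent monomials $\mathbf{x}^{\mathbf{u}}, \mathbf{x}^{\mathbf{v}}$ produces honest polynomials $F, H \in R[x_1,\dots,x_k]$ with the same coefficient lists as $f_1$ and $g_1$; in particular $F$ and $H$ are primitive, and $\mathsf{c}(FH) = \mathsf{c}(f_1 g_1)$. Now the classical Gauss's lemma for polynomial rings over a GCD-domain---proved by induction on $k$ from the one-variable version together with the fact that polynomial rings over GCD-domains are again GCD-domains---yields that $FH$ is primitive, and hence so is $f_1 g_1$.

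The main technical point is the content-preservation bookkeeping: one must verify that content is unchanged under the chain of embeddings $R[N] \hookrightarrow R[\gp(N)] \cong R[x_1^{\pm 1}, \dots, x_k^{\pm 1}]$ and under multiplication by a Laurent monomial. These checks are routine, since each such operation merely permutes or shifts exponents without altering the multiset of nonzero coefficients, but they require some care; the genuine content-multiplicativity itself is imported wholesale from the polynomial case.
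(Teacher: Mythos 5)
Your proof is correct, but it takes a genuinely different route from the paper's. Both arguments begin with the same reduction: factor out the contents and reduce to showing that a product of primitive polynomial expressions is primitive. At that point the paper simply cites a result of Gilmer and Parker \cite[Proposition~4.6]{GP74}, which states exactly that the product of primitive elements of $R[M]$ is primitive when $M$ is torsion-free; the whole proof is two lines. You instead re-derive that fact in the case at hand: you restrict to the finitely generated submonoid $N$ generated by the supports, observe that $\gp(N)$ is a finitely generated torsion-free abelian group and hence free of some rank $k$, embed $R[N]$ into the Laurent polynomial ring, shift by monomials to land in $R[x_1,\dots,x_k]$, and invoke the classical multivariate Gauss's Lemma. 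This is a standard and sound technique, and all your bookkeeping claims check out (content depends only on the multiset of nonzero coefficients, which is preserved at every step, and the support of $f_1g_1$ lies in $N$). What your route buys is self-containment modulo only the classical polynomial-ring statement, and it makes visible \emph{why} torsion-freeness is the operative hypothesis: it is precisely what makes $\gp(N)$ free. What the paper's route buys is brevity and generality (no finite generation or passage to $\gp(N)$ is needed, since the cited proposition already covers arbitrary torsion-free $M$). One small point worth tightening: the induction you sketch for the multivariate Gauss's Lemma requires the auxiliary observation that, for $S = R[x_1,\dots,x_{k-1}]$, the $R$-content of $F \in S[x_k]$ is an associate of the $R$-content of its $S$-content (or, alternatively, one can bypass the induction entirely with the Kronecker substitution $x_i \mapsto t^{N^{i-1}}$ for $N$ large); this is routine but should be acknowledged rather than folded silently into the phrase ``proved by induction on $k$.''
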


\begin{proof}
	Since $R$ is a GCD-domain, there exist primitive polynomial expressions~$f_1$ and~$g_1$ in $R[M]$ such that $f = \mathsf{c}(f) f_1$ and $g = \mathsf{c}(g) g_1$. Because $M$ is a torsion-free monoid, it follows from~\cite[Proposition~4.6]{GP74} that the element $f_1 g_1$ is primitive in $R[M]$. Therefore $\mathsf{c}(f_1 g_1) = R^\times$. As a consequence, we find that
	\[
		\mathsf{c}(fg) = \mathsf{c} \big( \mathsf{c}(f) f_1 \mathsf{c}(g) g_1 \big) = \mathsf{c}(f) \mathsf{c}(g) \mathsf{c}(f_1 g_1) = \mathsf{c}(f) \mathsf{c}(g),
	\]
	as desired.
\end{proof}

Let $F$ denote the field of fractions of a GCD-domain $R$. Gauss's Lemma states that a non-constant polynomial $f$ with coefficients in $R$ is irreducible in $R[X]$ if and only if it is irreducible in $F[X]$ and primitive in $R[X]$. Now we extend Gauss's Lemma to the context of monoid rings.

\begin{theorem}[Extended Gauss's Lemma]
	Let $M$ be a reduced torsion-free monoid, and let $R$ be a GCD-domain with field of fractions $F$. Then an element $f \in R[M] \setminus R$ is irreducible in $R[M]$ if and only if $f$ is irreducible in $F[M]$ and primitive in $R[M]$.
\end{theorem}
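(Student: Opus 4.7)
The plan is to adapt the classical polynomial-ring proof of Gauss's Lemma to the monoid-ring setting, relying on Lemma~\ref{lem:content identity} as the key multiplicative bridge and on the fact from \cite[Corollary~4.2]{GP74} that the units of $R[M]$ and $F[M]$ are $R^\times$ and $F^\times$, respectively. Since $R$ is a GCD-domain, every nonzero $f \in R[M]$ admits a decomposition $f = c\, f_1$ with $c \in \mathsf{c}(f)$ and $f_1 \in R[M]$ primitive, a standard extraction I will use repeatedly.

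For the forward direction, assume $f$ is irreducible in $R[M]$. I would first verify primitivity by writing $f = c\, f_1$ as above; since $f \in R[M] \setminus R$, the support of $f_1$ still meets $M \setminus \{0\}$, so $f_1 \notin R^\times = (R[M])^\times$, and irreducibility forces $c \in R^\times$. I would then prove irreducibility in $F[M]$ by taking any factorization $f = gh$ with $g, h \in F[M]$. Clear denominators to obtain $a, b \in R^\bullet$ with $ag, bh \in R[M]$, then extract contents: $ag = c_1 g_1$ and $bh = c_2 h_1$ with $g_1, h_1$ primitive. Multiplying yields $ab\, f = c_1 c_2\, g_1 h_1$, and Lemma~\ref{lem:content identity} gives $\mathsf{c}(g_1 h_1) = R^\times$. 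Taking contents on both sides and using $\mathsf{c}(f) = R^\times$ shows $c_1 c_2 = ab\, u$ for some $u \in R^\times$; cancellation in the integral domain $R[M]$ then yields $f = u\, g_1 h_1$. Irreducibility of $f$ forces one of $g_1, h_1$, say $g_1$, to lie in $R^\times$, making $ag$ a nonzero element of $R$, hence $g \in F^\times = (F[M])^\times$.

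For the reverse direction, assume $f$ is primitive in $R[M]$ and irreducible in $F[M]$. Since $f \in R[M] \setminus R$ and $(R[M])^\times = R^\times$, $f$ itself is not a unit of $R[M]$. Given any factorization $f = gh$ in $R[M]$, it is also a factorization in $F[M]$, so irreducibility places one factor, say $g$, in $F^\times$; being also in $R[M]$ this forces $g \in R^\bullet$. Taking contents gives $\mathsf{c}(f) = g\, \mathsf{c}(h)$, and primitivity of $f$ yields $g\, \mathsf{c}(h) = R^\times$, whence $g \in R^\times = (R[M])^\times$. The only mildly technical step is the clearing of denominators for an arbitrary element of $F[M]$, which is immediate by multiplying through by a common denominator of the finitely many coefficients involved; I do not anticipate any serious obstacle, since Lemma~\ref{lem:content identity} absorbs all the nontrivial multiplicative content and the remaining work is bookkeeping with cosets modulo $R^\times$.
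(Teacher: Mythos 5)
Your proposal is correct and follows essentially the same route as the paper's proof: both directions hinge on Lemma~\ref{lem:content identity}, with primitivity extracted via $f = c f_1$, denominators cleared to reduce an $F[M]$-factorization to a product of primitives in $R[M]$, and the reverse direction using $R[M] \cap F^\times = R \setminus \{0\}$ together with primitivity of $f$. The only difference is bookkeeping: you clear denominators and then extract contents in two steps, where the paper directly scales each factor to a primitive element; the arguments are otherwise identical.
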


\begin{proof}
	For the direct implication, suppose that $f$ is irreducible in $R[M]$. If $r \in \mathsf{c}(f)$, then there exists $g \in R[M] \setminus R$ such that $f = rg$. Because $R[M]^\times \subset R$, the element $g$ is not a unit of $R[M]$. As $f$ is irreducible in $R[M]$, one finds that $r \in R[M]^\times = R^\times$. So $\mathsf{c}(f) = R^\times$, which implies that $f$ is primitive in $R[M]$. To argue that $f$ is irreducible in $F[M]$, take $g_1, g_2 \in F[M]$ such that $f = g_1 g_2$. Since $R$ is a GCD-domain, there exist nonzero elements $a_1, a_2, b_1, b_2 \in R$ such that both
	\[
		h_1 := \frac{a_1}{b_1} g_1 \quad \text{and} \quad h_2 := \frac{a_2}{b_2} g_2
	\]
	are primitive elements of $R[M]$. Clearly, $a_1 a_2 f = b_1 b_2 h_1 h_2$. This, along with Lemma~\ref{lem:content identity}, implies that
	\[
		a_1 a_2 R^\times = a_1 a_2 \mathsf{c}(f) = \mathsf{c}(a_1 a_2 f) = \mathsf{c}(b_1 b_2 h_1 h_2) = b_1 b_2 \mathsf{c}(h_1) \mathsf{c}(h_2) = b_1 b_2 R^\times.
	\]
	Then $\frac{a_1 a_2}{b_1 b_2} \in R^\times$ and, as a consequence, $\frac{a_1 a_2}{b_1 b_2} f = h_1 h_2$ is irreducible in $R[M]$. Thus, either $h_1 \in R[M]^\times = R^\times$ or $h_2 \in R[M]^\times = R^\times$. This, in turn, implies that either~$g_1$ or~$g_2$ belongs to $F^\times = F[M]^\times$. Hence $f$ is irreducible in $F[M]$.
	
	Tor argue the reverse implication, suppose that $f$ is irreducible in $F[M]$ and primitive in $R[M]$. Then take elements $g_1$ and $g_2 \in R[M]$ such that $f = g_1 g_2$. Since $f$ is irreducible in $F[M]$, either $g_1 \in F[M]^\times = F^\times$ or $g_2 \in F[M]^\times = F^\times$. This, along with the fact that $R[M] \cap F^\times = R \setminus \{0\}$, implies that either $g_1 \in \mathsf{c}(f)$ or $g_2 \in \mathsf{c}(f)$. As $\mathsf{c}(f) = R^\times = R[M]^\times$, either $g_1$ or $g_2$ belongs to $R[M]^\times$. As a result, $f$ is irreducible in $R[M]$, which concludes the proof.
\end{proof}
\medskip

\subsection{Extended Eisenstein's Criterion} It is hardly debatable that Eisenstein's Criterion is one of the most popular and useful criteria to argue the irreducibility of certain polynomials. Now we proceed to offer an extended version of Eisenstein's Criterion for monoid rings.

\begin{prop}[Extended Eisenstein's Criterion] \label{prop:Eisenstein Criterion for PA}
	Let $M$ be a reduced totally-ordered torsion-free monoid, and let $R$ be an integral domain. Suppose that the element
	\[
		f = \alpha_n X^{q_n} + \dots + \alpha_1 X^{q_1} + \alpha_0 \in R[M] \setminus \{0\},
	\]
	written in canonical form, is primitive. If there exists a prime ideal $P$ of $R$ satisfying the conditions
	\begin{enumerate}
		\item $\alpha_n \notin P$,
		\vspace{3pt}
		\item $\alpha_j \in P$ for every $j \in \ldb 0,n-1 \rdb$, and
		\vspace{3pt}
		\item $\alpha_0 \notin P^2$,
	\end{enumerate}
	then $f$ is irreducible in $R[M]$.
\end{prop}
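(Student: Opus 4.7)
The plan is to mimic the classical proof of Eisenstein's Criterion. Suppose, for a contradiction, that $f = gh$ with $g, h \in R[M]$ both non-units, and write
\[
g = \beta_s X^{r_s} + \dots + \beta_0 X^{r_0}, \qquad h = \gamma_t X^{u_t} + \dots + \gamma_0 X^{u_0}
\]
in canonical form. Since $M$ is totally ordered and $R[M]$ is an integral domain, the leading term of $gh$ is $\beta_s\gamma_t X^{r_s+u_t}$ and the trailing term is $\beta_0\gamma_0 X^{r_0+u_0}$; matching with $f$ yields $r_s + u_t = q_n$, $\alpha_n = \beta_s\gamma_t$, and $r_0 + u_0 = 0$. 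Reducedness of $M$ forces $r_0 = u_0 = 0$, and hence $\alpha_0 = \beta_0\gamma_0$ with both $\beta_0$ and $\gamma_0$ nonzero.

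The heart of the argument is the reduction modulo $P$. Let $R[M] \to (R/P)[M]$ denote the coefficientwise quotient map and write $\bar{\,\cdot\,}$ for images; because $P$ is prime and $M$ is torsion-free, $(R/P)[M]$ is an integral domain by \cite[Theorem~8.1]{rG84}. Conditions (1) and (2) collapse the image of $f$ to a single monomial: $\bar f = \bar\alpha_n X^{q_n}$. The key claim is that $\bar g$ and $\bar h$ must then be monomials. Since $\alpha_n = \beta_s\gamma_t \notin P$, both $\bar\beta_s$ and $\bar\gamma_t$ are nonzero, so the leading term of $\bar g \bar h$ is $\bar\beta_s\bar\gamma_t X^{r_s+u_t}$. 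Letting $i^*$ (respectively, $j^*$) be the smallest index with $\beta_{i^*} \notin P$ (respectively, $\gamma_{j^*} \notin P$), the trailing term of $\bar g \bar h$ is $\bar\beta_{i^*}\bar\gamma_{j^*} X^{r_{i^*}+u_{j^*}}$. Because $\bar g \bar h$ is a single monomial, its leading and trailing exponents coincide; passing to the totally-ordered torsion-free group envelope of $M$, the equation $(r_s - r_{i^*}) + (u_t - u_{j^*}) = 0$ with both summands non-negative forces $i^* = s$ and $j^* = t$. Hence every coefficient of $g$ or $h$ other than the leading one lies in $P$.

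The proof then closes by a case split on the leading exponents. If both $r_s > 0$ and $u_t > 0$, then $\beta_0, \gamma_0 \in P$, so $\alpha_0 = \beta_0\gamma_0 \in P^2$, contradicting condition (3). Otherwise, by symmetry assume $r_s = 0$; combined with $r_0 = 0$ and the strict descent $r_s > r_{s-1} > \dots > r_0$, this collapses to $s = 0$, so $g = \beta_0 \in R$. From $f = \beta_0 h$, $\beta_0$ divides every coefficient of $f$, so primitivity of $f$ (i.e., $\cont(f) = R^\times$) forces $\beta_0$ to be a unit of $R$; hence $g \in R^\times = R[M]^\times$, contradicting that $g$ is a non-unit. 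I expect the main obstacle to be the monomialization of $\bar g$ and $\bar h$ in $(R/P)[M]$, which genuinely uses the total order on $M$ (via the group envelope); once this is in place, the remainder is a routine Eisenstein-style case analysis leveraging the reducedness of $M$ and the primitivity of $f$.
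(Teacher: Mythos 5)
Your proof is correct and follows essentially the same route as the paper's: reduce modulo $P$, observe that $\bar f$ collapses to a monomial so both factors must reduce to monomials (hence all non-leading coefficients lie in $P$), and derive $\alpha_0 \in P^2$. You simply spell out details the paper leaves implicit --- the group-envelope argument for monomialization, the use of reducedness to place the constant terms at exponent $0$, and the primitivity argument for a constant factor (which the paper dispatches at the outset by noting $g_1, g_2 \notin R$).
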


\begin{proof}
	We let $\bar{R}$ denote the quotient $R/P$ and, for any $h \in R[M]$, we let~ $\bar{h}$ denote the image of $h$ under the natural surjection $R[M] \to \bar{R}[M]$, i.e., $\bar{h}$ is the result of reducing the coefficients of $h$ modulo $P$. To argue that $f$ is irreducible suppose, by way of contradiction, that $f = g_1 g_2$ for some nonzero nonunit elements $g_1$ and $g_2$ of $R[M]$. As~$f$ is primitive, $g_1 \notin R$ and $g_2 \notin R$. By the condition~(2) in the statement, one obtains that $\bar{g}_1 \bar{g}_2 = \bar{f} = \bar{\alpha}_n X^{q_n}$. Thus, both $\bar{g}_1$ and $\bar{g}_2$ are monomials. This, along with the fact that none of the leading coefficients of $g_1$ and $g_2$ are in $P$ (because $\alpha_n \notin P$), implies that the constant coefficients of both $g_1$ and $g_2$ are in $P$. As a result, the constant coefficient $\alpha_0$ of $f$ must belong to $P^2$, which is a contradiction.
\end{proof}

\begin{cor} \label{cor:irreducible polynomials of any degree}
	Let $M$ be a reduced totally-ordered torsion-free monoid, and let $R$ be an integral domain containing a prime element. Then for each $q \in M^\bullet$, there exists an irreducible polynomial expression in $R[M]$ of degree $q$.
\end{cor}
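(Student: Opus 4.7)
The plan is to exhibit an explicit irreducible polynomial expression of degree $q$ and apply the Extended Eisenstein's Criterion (Proposition~\ref{prop:Eisenstein Criterion for PA}). Let $p \in R$ be a prime element (which exists by hypothesis), and consider
\[
    f := X^q + p \in R[M].
\]
Since $q \in M^\bullet$ and $M$ is reduced and totally ordered, $q$ is strictly greater than the identity $0$ of $M$, so $f$ is already written in canonical form with $\deg(f) = q$, leading coefficient $\alpha_1 = 1$, and constant coefficient $\alpha_0 = p$.

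First I would verify that $f$ is primitive. Because $1$ is a unit of $R$ that divides every element of $R$, it is a greatest common divisor of the coefficient set $\{1, p\}$, so $\mathsf{c}(f) = R^\times$. This observation bypasses any appeal to $R$ being a GCD-domain and shows directly that $f$ meets the primitivity hypothesis of the Extended Eisenstein's Criterion.

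Next I would take $P := (p)$, which is a prime ideal of $R$ because $p$ is a prime element, and check the three conditions of Proposition~\ref{prop:Eisenstein Criterion for PA}. Condition~(1) holds because $P$ is a proper ideal (as $p$ is not a unit), so $\alpha_1 = 1 \notin P$. Condition~(2) is vacuous for $f$ since the index set $\ldb 0, n-1 \rdb = \ldb 0, 0 \rdb$ forces us only to check $\alpha_0 = p \in P$, which is immediate. The only genuine verification is condition~(3): if $p \in P^2 = (p^2)$, then $p = p^2 r$ for some $r \in R$, whence $p(1 - pr) = 0$; since $R$ is an integral domain and $p \neq 0$, cancellation yields $pr = 1$, contradicting the assumption that $p$ is a nonunit. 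Hence $\alpha_0 = p \notin P^2$.

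I do not expect any real obstacle here. The only place where one must be slightly careful is condition~(3), where one uses that $R$ is an integral domain (not merely a commutative ring) in order to conclude $p \notin (p^2)$ from the primeness of $p$. With all three Eisenstein conditions verified, Proposition~\ref{prop:Eisenstein Criterion for PA} yields that $f$ is irreducible in $R[M]$, and by construction $\deg(f) = q$.
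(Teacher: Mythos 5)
Your proposal is correct and follows exactly the paper's approach: take $f := X^q + p$ and apply the Extended Eisenstein's Criterion with $P := (p)$. The paper states this as an immediate consequence, while you spell out the verification of primitivity and the three Eisenstein conditions, all of which check out.
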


\begin{proof}
	Let $p$ be a prime element of $R$. It suffices to verify that, for any $q \in M^\bullet$, the element $f := X^q + p \in R[M]$ is irreducible. Indeed, this is an immediate consequence of Proposition~\ref{prop:Eisenstein Criterion for PA} once we take $P := (p)$. 
\end{proof}

In Corollary~\ref{cor:irreducible polynomials of any degree}, the integral domain $R$ is required to contain a prime element. This condition is not superfluous, as the next example illustrates.

\begin{example}
	For a prime number $p$, consider the monoid algebra $\ff_p[M]$, where $M$ is the submonoid $\langle 1/p^n \mid n \in \nn \rangle$ of $(\qq_{\ge 0},+)$ and $\ff_p$ is a finite field of characteristic $p$. It is clear that $M$ is a reduced totally-ordered torsion-free monoid. Now let
	\[
		f := \alpha_1 X^{q_1} + \dots + \alpha_n X^{q_n}
	\]
	be an element of $\ff_p[M] \setminus \ff_p$ written in canonical form. As $\ff_p$ is a perfect field of characteristic $p$, the Frobenius homomorphism $x \mapsto x^p$ is surjective and, therefore, for each $i \in \ldb 1,n \rdb$ there exists $\beta_i \in \ff_p$ with $\alpha_i = \beta_i^p$. On the other hand, it is clear that $q_i/p \in M$ for every $i \in \ldb 1,n \rdb$. As
	\[
		f = \alpha_1 X^{q_1} + \dots + \alpha_n X^{q_n} = \big( \beta_1 X^{q_1/p} + \dots + \beta_n X^{q_n/p} \big)^p,
	\]
	the polynomial expression $f$ is not irreducible in $\ff_p[M]$. Hence the monoid algebra $\ff_p[M]$ does not contain irreducible elements. Clearly, the field $\ff_p$ is an integral domain containing no prime elements.
\end{example}
\medskip

\section{Factorizations in Monoid Algebras}
\label{sec:factorization in monoid algebras}

A \emph{numerical semigroup} is a submonoid $N$ of $(\nn_0,+)$ whose complement is finite, i.e., $|\nn_0 \setminus N| < \infty$. Numerical semigroups are finitely generated and, therefore, atomic. However, the only factorial numerical semigroup is $(\nn_0,+)$. For an introduction to numerical semigroups, see \cite{GR09}, and for some of their many applications, see~\cite{AG16}. A \emph{Puiseux monoid}, on the other hand, is an additive submonoid of $(\qq_{\ge 0},+)$. Albeit Puiseux monoids are natural generalizations of numerical semigroups, the former are not necessarily finitely generated or atomic; for example, consider $\langle 1/2^n \mid n \in \nn \rangle$. The factorization structure of Puiseux monoids have been compared with that of other well-studied atomic monoids in~\cite{fG18} and, more recently, in~\cite{CGG19}. In this section, we determine the Puiseux monoids whose monoid algebras are Dedekind domains, Euclidean domains, PIDs, UFDs, or HFDs.

\begin{definition}
	An atomic monoid $M$ is \emph{half-factorial} (or an \emph{HF-monoid}) if for all $x \in M \setminus U(M)$ and $z, z' \in \mathsf{Z}(x)$ the equality $|z| = |z'|$ holds. An integral domain is \emph{half-factorial} (or an \emph{HFD}) if its multiplicative monoid is an HF-monoid.
\end{definition}

Clearly, half-factoriality is a relaxed version of being a factorial monoid or a UFD. Although the concept of half-factoriality was first considered by Carlitz in his study of algebraic number fields~\cite{lC60}, it was A.~Zaks who coined the term ``half-factorial domain"~\cite{aZ76}.

\begin{definition}
	An atomic monoid $M$ is \emph{other-half-factorial} (or an \emph{OHF-monoid}) if for all $x \in M \setminus U(M)$ and $z, z' \in \mathsf{Z}(x)$ the equality $|z| = |z'|$ implies that $z = z'$.
\end{definition}

Observe that other-half-factoriality is somehow a dual version of half-factoriality. Although an integral domain is a UFD if and only if its multiplicative monoid is an OHF-monoid~\cite[Corollary~2.11]{CS11}, OHF-monoids are not always factorial or half-factorial, even in the class of Puiseux monoids.
\medskip

\begin{prop} \label{prop:HF and OHF PM characterization}
	For a nontrivial atomic Puiseux monoid $M$, the following conditions hold.
	\begin{enumerate}
		\item $M$ is an HF-monoid if and only if $M$ is factorial.
		\vspace{3pt}
		\item $M$ is an OHF-monoid if and only if $|\mathcal{A}(M)| \le 2$.
	\end{enumerate}
\end{prop}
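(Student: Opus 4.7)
Both directions of each part admit elementary proofs leveraging the embedding $M \hookrightarrow \qq_{\ge 0}$.

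For part (1), factoriality trivially implies half-factoriality, so I focus on the converse. The idea is to show that a half-factorial nontrivial atomic Puiseux monoid has exactly one atom. Given $a_1, a_2 \in \mathcal{A}(M)$, clearing denominators shows that $\mathsf{d}(a_1)\mathsf{n}(a_2) \cdot a_1 = \mathsf{n}(a_1)\mathsf{d}(a_2) \cdot a_2$ inside $M$, producing two factorizations of a common nonzero element. Half-factoriality forces the two lengths to agree, and canceling yields $a_1 = a_2$. Consequently $M$ is generated by a single atom and hence isomorphic to $(\nn_0,+)$, which is factorial.

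For the easy direction of part (2), if $\mathcal{A}(M) \subseteq \{a_1, a_2\}$ and two factorizations $k_1 a_1 + k_2 a_2$ and $k_1' a_1 + k_2' a_2$ of the same element satisfy $k_1 + k_2 = k_1' + k_2'$, then $t := k_1 - k_1' = k_2' - k_2$ satisfies $t(a_1 - a_2) = 0$, and $a_1 \ne a_2$ forces $t = 0$, so the two factorizations coincide. The case $|\mathcal{A}(M)| \le 1$ is immediate.

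For the converse direction of part (2), suppose $a_1, a_2, a_3 \in \mathcal{A}(M)$ are three distinct atoms. The homogeneous system
\[
	d_1 + d_2 + d_3 = 0, \qquad d_1 a_1 + d_2 a_2 + d_3 a_3 = 0,
\]
consists of two linear equations over $\qq$ in three unknowns and hence admits a nonzero integer solution $(d_1, d_2, d_3)$. Setting $k_i := \max\{d_i, 0\}$ and $k_i' := \max\{-d_i, 0\}$ produces two distinct nonempty factorizations $z = \sum_{i=1}^3 k_i a_i$ and $z' = \sum_{i=1}^3 k_i' a_i$ of a common nonzero element of $M$ with $|z| = |z'|$, violating OHF. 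The main subtlety is ensuring that both factorizations are nonempty and represent a nonzero element: the condition $\sum d_i = 0$ combined with $(d_1,d_2,d_3) \neq 0$ forces the positive and negative parts of the solution to be simultaneously nonzero, and positivity of the $a_i$ then guarantees that $z = z' \in M \setminus \{0\}$.
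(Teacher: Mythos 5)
Your proof is correct. Part (1) and the forward direction of part (2) take essentially the paper's route: for (1) you compare the same two factorizations of $\mathsf{n}(a_1)\mathsf{n}(a_2)$ that the paper uses, and for the forward direction of (2) the paper orders the three atoms, clears denominators, and explicitly produces $m,n \in \nn$ with $m(a'_2-a'_1) = n(a'_3-a'_2)$ --- which is precisely the integer solution $(m,-(m+n),n)$ of your homogeneous system --- so your dimension-count phrasing is a mild repackaging of the same construction (and your observation that $\sum_i d_i = 0$ with $(d_1,d_2,d_3)\neq 0$ forces both the positive and negative parts to be nonzero correctly handles nonemptiness; distinctness of $z$ and $z'$ is immediate since $k-k'=d\neq 0$ and $M$ is reduced). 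The genuine divergence is in the reverse direction of (2): the paper invokes \cite[Proposition~3.2]{fG17} to replace $M$ by a numerical semigroup, reduces to $N=\langle a,b\rangle$ with $\gcd(a,b)=1$, and then cites \cite[Example~2.13]{CS11} for the fact that such an $N$ is an OHF-monoid, whereas you give a self-contained two-line computation: equal lengths yield $t := k_1-k_1' = k_2'-k_2$ with $t(a_1-a_2)=0$ in $\qq$, and $a_1\neq a_2$ forces $t=0$. Your version is more elementary and eliminates two external dependencies; the paper's version buys a connection to the numerical-semigroup and OHF literature but proves nothing your direct argument does not.
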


\begin{proof}
	For the direct implication of~(1), suppose that $M$ is an HF-monoid. Since $M$ is an atomic nontrivial Puiseux monoid, $\mathcal{A}(M)$ is not empty. Let $a_1$ and $a_2$ be two atoms of $M$. Then $z_1 := \mathsf{n}(a_2) \mathsf{d}(a_1) a_1$ and $z_2 :=  \mathsf{n}(a_1) \mathsf{d}(a_2) a_2$ are two factorizations of the element $\mathsf{n}(a_1) \mathsf{n}(a_2) \in M$. Because $M$ is an HF-monoid, $|z_1| = |z_2|$ and so
	\[
		\mathsf{n}(a_2) \mathsf{d}(a_1) = \mathsf{n}(a_1)  \mathsf{d}(a_2).
	\]
	Therefore $a_1 = a_2$, and then $M$ contains only one atom. Hence $M \cong (\nn_0,+)$ and, as a result, $M$ is factorial. The reverse implication of~(1) is trivial.
	
	To prove the direct implication of~(2), assume that $M$ is an OHF-monoid. If $M$ is factorial, then $M \cong (\nn_0,+)$, and we are done. Then suppose that $M$ is not factorial. In this case, $|\mathcal{A}(M)| \ge 2$. Assume, by way of contradiction, that $|\mathcal{A}(M)| \ge 3$. Take $a_1, a_2, a_3 \in \mathcal{A}(M)$ satisfying that $a_1 < a_2 < a_3$. Let $d = \mathsf{d}(a_1) \mathsf{d}(a_2) \mathsf{d}(a_3)$, and set $a'_i = d a_i$ for each $i \in \ldb 1,3 \rdb$. Since $a'_1, a'_2$, and $a'_3$ are integers satisfying that $a'_1 < a'_2 < a'_3$, there exist $m,n \in \nn$ such that
	\begin{equation} \label{eq:OHF}
		m(a'_2 - a'_1) = n(a'_3 - a'_2).
	\end{equation}
	Clearly, $z_1 := ma_1 + na_3$ and $z_2 := (m+n)a_2$ are two distinct factorizations in $\mathsf{Z}(M)$ satisfying that $|z_1| = m+n = |z_2|$. In addition, after dividing both sides of the equality~(\ref{eq:OHF}) by $d$, one obtains that
	\[
		ma_1 + na_3 = (m+n)a_2,
	\]
	which means that $z_1$ and $z_2$ are factorizations of the same element. However, this contradicts that $M$ is an OHF-monoid. Hence $|\mathcal{A}(M)| \le 2$, as desired. For the reverse implication of~(2), suppose that $|\mathcal{A}(M)| \le 2$. By~\cite[Proposition~3.2]{fG17}, $M$ is isomorphic to a numerical semigroup $N$. As $N$ is generated by at most two elements, either $N = (\nn_0,+)$ or $N = \langle a, b \rangle$ for $a, b \in \nn_{\ge 2}$ with $\gcd(a,b) = 1$. If $N = (\nn_0,+)$, then~$N$ is factorial and, in particular, an OHF-monoid. On the other hand, if $N = \langle a, b \rangle$, then it is an OHF-monoid by~\cite[Example~2.13]{CS11}.
\end{proof}
\medskip

In~\cite[Theorem~8.4]{GP74} Gilmer and Parker characterize the monoid algebras that are Dedekind domains, Euclidean domains, or PIDs. We conclude this section extending such a characterization in the case where the exponent monoids are Puiseux monoids.

\begin{theorem} 
	For a nontrivial Puiseux monoid $M$ and a field $F$, the following conditions are equivalent:
	\begin{enumerate}
		\item $F[M]$ is a Euclidean domain;
		\vspace{3pt}
		\item $F[M]$ is a PID;
		\vspace{3pt}
		\item $F[M]$ is a UFD;
		\vspace{3pt}
		\item $F[M]$ is an HFD;
		\vspace{3pt}
		\item $M \cong (\nn_0,+)$;
		\vspace{3pt}
		\item $F[M]$ is a Dedekind domain.
	\end{enumerate}
\end{theorem}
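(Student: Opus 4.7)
The strategy is to close a cycle of implications, with the substantive work concentrated in (4) $\Rightarrow$ (5). The implications (1) $\Rightarrow$ (2) $\Rightarrow$ (3) $\Rightarrow$ (4) and (2) $\Rightarrow$ (6) are classical (Euclidean $\Rightarrow$ PID $\Rightarrow$ UFD $\Rightarrow$ HFD, and every PID is a Dedekind domain). For (5) $\Rightarrow$ (1), Remark~\ref{rem:isomophism of monoid algebras} yields $F[M] \cong F[(\nn_0,+)] \cong F[X]$, which is Euclidean under the degree function. Hence only (4) $\Rightarrow$ (5) and (6) $\Rightarrow$ (5) require substantive arguments.

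For (6) $\Rightarrow$ (5), I would appeal directly to the Gilmer--Parker characterization of Dedekind monoid algebras in \cite[Theorem~8.4]{GP74}, which implies that $F[M]$ is a Dedekind domain only when $M$ is isomorphic to $(\nn_0,+)$ or $(\zz,+)$. Since $M$ is a nontrivial Puiseux monoid, it is reduced, so $M \not\cong \zz$, and we are left with $M \cong (\nn_0,+)$.

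For (4) $\Rightarrow$ (5), the plan is to transfer half-factoriality from $F[M]$ down to half-factoriality of $M$ itself and then invoke Proposition~\ref{prop:HF and OHF PM characterization}(1). The key bridge, which I expect to be the main technical point, is a support/degree lemma: because $M$ is totally ordered, the leading and lowest terms of a product $fg$ in $F[M]$ come from uniquely determined pairs, so a product of two elements whose supports both have size at least $2$ cannot be a monomial. Two consequences follow. First, any factorization of $X^x$ in $F[M]$ consists entirely of monomial factors $\alpha_i X^{s_i}$, and the exponents $s_i$ must be atoms of $M$ (else one could split off a unit and contradict irreducibility); this shows that atomicity of $F[M]$ forces atomicity of $M$. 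Second, the same support argument gives that $X^a$ is irreducible in $F[M]$ if and only if $a \in \mathcal{A}(M)$.

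Granting the bridges, assume $F[M]$ is an HFD. Then $M$ is atomic, and if $M$ had two distinct atoms $a_1 \ne a_2$, letting $y := \mathsf{n}(a_1)\mathsf{n}(a_2)$ and using the identity $y = (\mathsf{n}(a_2)\mathsf{d}(a_1))\,a_1 = (\mathsf{n}(a_1)\mathsf{d}(a_2))\,a_2$ in $M$ would yield two factorizations
\[
    X^y = (X^{a_1})^{\mathsf{n}(a_2)\mathsf{d}(a_1)} = (X^{a_2})^{\mathsf{n}(a_1)\mathsf{d}(a_2)}
\]
of $X^y$ into irreducibles of $F[M]$ whose lengths are unequal (since $a_1 \ne a_2$ forces $\mathsf{n}(a_2)\mathsf{d}(a_1) \ne \mathsf{n}(a_1)\mathsf{d}(a_2)$), contradicting that $F[M]$ is an HFD. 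Therefore $|\mathcal{A}(M)| \le 1$, and since $M$ is atomic and nontrivial, $M \cong (\nn_0,+)$, closing the cycle.
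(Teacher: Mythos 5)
Your proposal is correct, and its global structure (the cycle $(1)\Rightarrow(2)\Rightarrow(3)\Rightarrow(4)\Rightarrow(5)\Rightarrow(1)$ together with $(2)\Rightarrow(6)\Rightarrow(5)$ via Gilmer--Parker) coincides with the paper's. The one genuine difference is in $(4)\Rightarrow(5)$: the paper simply cites Kim's result \cite[Proposition~1.4]{hK01} to pass from ``$F[M]$ is an HFD'' to ``$M$ is an HF-monoid'' (using that $M$ is torsion-free) and then applies Proposition~\ref{prop:HF and OHF PM characterization}(1), whereas you prove the transfer from scratch with the support/degree argument and then inline the two-factorization computation of $X^{\mathsf{n}(a_1)\mathsf{n}(a_2)}$ directly in $F[M]$ --- which is exactly the computation the paper performs inside $M$ in the proof of Proposition~\ref{prop:HF and OHF PM characterization}(1). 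Your route is self-contained and elementary, at the cost of carrying the auxiliary lemmas (non-monomials have non-monomial products; $X^a$ irreducible iff $a \in \mathcal{A}(M)$; atomicity descends from $F[M]$ to $M$); the paper's route is shorter but leans on an external reference. One small imprecision: you state the support lemma for products of \emph{two} non-monomials, but what you actually use is that $fg$ is a non-monomial as soon as \emph{one} of $f,g$ is; this follows from the same observation, since $\deg(fg)-\mathrm{ord}(fg) = (\deg f - \mathrm{ord}\, f) + (\deg g - \mathrm{ord}\, g)$ in a totally ordered cancellative exponent monoid. Your handling of $(6)\Rightarrow(5)$, using reducedness of $M$ to exclude $(\zz,+)$ from the Gilmer--Parker list, is a slightly more explicit version of what the paper does.
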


\begin{proof}
	It is well known that every Euclidean domain is a PID, and every PID is a UFD. Therefore condition~(1) implies condition~(2), and condition~(2) implies condition~(3). In addition, it is clear that every UFD is an HFD, and so condition~(3) implies condition~(4). As Puiseux monoids are torsion-free,~\cite[Proposition~1.4]{hK01} ensures that $M$ is an HF-monoid when $F[M]$ is an HFD. This, along with Proposition~\ref{prop:HF and OHF PM characterization}(1), guarantees that $M \cong (\nn_0,+)$ provided that $F[M]$ is an HFD. Thus, condition~(4) implies condition~(5). Now notice that if condition~(5) holds, then $F[M] \cong F[\nn_0] = F[X]$ (by Remark~\ref{rem:isomophism of monoid algebras}) is a Euclidean domain, which is condition~(1). Then we have argued that the first five conditions are equivalent.
	
	To include~(6) in the set of already-established equivalent conditions, observe that condition~(2) implies condition~(6) because every PID is a Dedekind domain. On the other hand, suppose that the monoid algebra $F[M]$ is a Dedekind domain. Then the fact that $M$ is torsion-free, along with~\cite[Theorem~8.4]{GP74}, implies that $M \cong (\nn_0,+)$. Hence condition~(6) implies condition~(5), which completes the proof.
\end{proof}
\medskip

\section*{Acknowledgments}

\noindent While working on this paper, the author was supported by the NSF-AGEP Fellowship and the UC Dissertation Year Fellowship. The author would like to thank an anonymous referee, whose suggestions help to simplify and improve the initially-submitted version of this paper.
\medskip

\end{document}